%file created: 2015/07/27
%last updated: 2015/07/28
\documentclass[12pt]{amsart}
\usepackage{amsmath, amsfonts, amsbsy, amsthm, amscd, graphicx}
\usepackage{amssymb, latexsym, color}

%%%%%%%%%%%%%%%%%%%%%%%%%%%%%%%%%%%%%%%%%%%%%%%%%%%%%%%%%%%%%%%
% Output Format
%
\setlength{\textheight}{9in}%22.86cm
\addtolength{\textwidth}{.815in}%14.67cm
\addtolength{\oddsidemargin}{-.5in}
\addtolength{\topmargin}{-.5in}
\addtolength{\evensidemargin}{-.5in}
\addtolength{\footskip}{.5in}

\pagestyle{plain}

%%%%%%%%%%%%%%%%%%%%%%%%%%%%%%%%%%%%%%%%%%%%%%%%%%%%%%%%%%%
% Theorem style
%
\numberwithin{equation}{section}

\theoremstyle{plain}
\newtheorem{Theorem}{Theorem}[section]
\newtheorem{Proposition}[Theorem]{Proposition}

\newtheorem{Corollary}[Theorem]{Corollary}

\newtheorem{Problem}[Theorem]{Problem}

\theoremstyle{definition}

\theoremstyle{remark}
\newtheorem{Remark}{{\bf Remark}}
\newtheorem{Example}{Example}

%\newcommand{\Definition}{{\bf Definition.\enskip}}
%\newcommand{\Remark}{{\em Remark.\enskip}}
%\newcommand{\Example}{{\em Example.\enskip}}
%\newcommand{\Conjecture}{{\em Conjecture.\enskip}}
%\newcommand{\Proof}{{\em Proof.\enskip}}
%
%\renewcommand{\baselinestretch}{1.2}

%%%%%%%%%%%%%%%%%%%%%%%%%%%%%%%%%%%%%%%%%%%%%%%%%%%%%%%%%%%%%%%%
% Newcommands
%

\newcommand{\R}{{\mathbb R}}

\newcommand{\F}{{\mathbb F}}
\newcommand{\Z}{{\mathbb Z}}

\newcommand{\bary}{\mathop{\rm bar}\nolimits}

%%%%%%%%%%%%%%%%%%%%%%%%%%%%%%%%%%%%%%%%%%%%%%%%%%%%%%%%%%%%%%%%%

\begin{document}

\title{Optimal embedding and spectral gap of a finite graph}

\author{Takumi Gomyou$\mbox{}^1$} 
\thanks{$\mbox{}^{1,4}$Graduate School of Mathematics, Nagoya University,
Chikusa-ku, Nagoya 464-8602, Japan, d16001m@math.nagoya-u.ac.jp/ 
nayatani@math.nagoya-u.ac.jp}

\author{Toshimasa Kobayashi$\mbox{}^2$} 
\thanks{$\mbox{}^2$Faculty of Science and Engineering, Setsunan University, 
17-8 Ikedanaka-machi, Neyagawa-city, Osaka, 572-8508, Japan, kobayashi@mpg.setsunan.ac.jp}

\author{Takefumi Kondo$\mbox{}^3$}
\thanks{$\mbox{}^3$Mathematics and Computer Science, Graduate School of Scinece and Engineering, Kagoshima University, 
1-21-35 Korimoto, Kagoshima-city, Kagoshima, 890-0065, Japan, takefumi@sci.kagoshima-u.ac.jp}

\author{Shin Nayatani$\mbox{}^4$}%*}
%\thanks{* Corresponding author}
%\address{Graduate School of Mathematics, Nagoya University, Chikusa-ku, Nagoya 464-8602, Japan}
%\email{nayatani@math.nagoya-u.ac.jp}
%\thanks{Partly supported by the Grant-in-Aid
%for Scientific Research, The Ministry of Education,
%Science, Sports and Culture, Japan.}
%\subjclass[2010]{Primary~20F65; Secondary~58E20, 20P05.}
%\keywords{finitely generated group, random group, $\cat$ space, fixed-point property, 
%energy of map, Wang invariant, expander, Euclidean building}

%\date{}

\maketitle

\begin{abstract}
We introduce a new optimization problem regarding embeddings of a graph into a Euclidean space 
and discuss its relation to the two, mutually dual, optimizations problems introduced 
by G\"oring-Helmberg-Wappler. 
We prove that the Laplace eigenvalue maximization problem of G\"oring et al is also  dual to our embedding 
optimization problem. 
We solve the optimization problems for generalized polygons and graphs isomorphic 
to the one-skeltons of regular and semi-regular polyhedra. 
\end{abstract} 

\section*{Introduction} 
In this paper, we introduce a new optimization problem regarding embeddings 
of a finite graph into a Euclidean space, motivated by the study of a certain invariant 
of the metric cone over a CAT($1$) metric graph. 
The problem is related to the maximization problem regarding the first nonzero eigenvalue 
of the Laplacian, introduced by G\"oring-Helmberg-Wappler \cite{GoringHelmbergWappler2}. 
We discuss a relation between these two problems. 
In particular, we establish an inequality relating the optimal values of the problems 
and also give an example for which the equality sign is attained. 

A similar optimization problem regarding graph-embeddings was also considered 
in \cite{GoringHelmbergWappler2}. 
The problem is dual to their eigenvalue maximization problem mentioned above, 
and more remarkably there is no duality gap, meaning that the optimal values of the 
two problems necessarily coincide. 
We discuss relation between two optimization problems regarding graph-embeddings,  
and find a precise relation between the optimal values of these problems. 
This relation, combined with the no-duality-gap result mentioned above, 
makes it possible to establish a formula computing an optimal value of our 
embedding optimization problem in terms of that of the eigenvalue maximization 
problem. 

We give examples of graphs for which the optimization problems due to 
G\"oring-Helmberg-Wappler can be explicitly solved. 
They are isomorphic to the one-skeltons of regular and semi-regular polyhedra, 
and the optimal solutions for the embedding optimization problem realize 
the graphs as the one-skeltons of the given polyhedra. 

A dual problem in the framework of semidefinite programing can be formulated 
if a primal problem and an appropriate Lagrange function are given, and 
different choices of Lagrange function may produce different dual problems. 
In fact, we prove that the eigenvalue maximization problem is also dual to 
our embedding optimization problem. 

\section{Embedding and spectrum gap of a finite graph} 
Let $G=(V, E)$ be a finite connected graph, where $V$ and $E$ are the sets of 
vertices and (undirected) edges, respectively. 
We assume that $G$ is simple, that is, that $G$ has no loops nor multiple edges. 
Denoting the set of directed edges by $\overrightarrow{E}$ and defining the 
equivalence relation $\sim$ on $\overrightarrow{E}$ by $(u,v)\sim (v,u)$, 
we regard $E$ as the set of equivalence classes $uv$. 
Thus, $uv=vu$ as elements of $E$. 

Throughout this section, we fix a weight $m_0\colon V\to \R_{> 0}$ 
%\emii{(comment: $m_0$ should be $> 0$; otherwise Laplacian cannot be defined)}
on the set of vertices $V$, 
and a {\em distance parameter} $d\colon E\to \R_{>0}$ on the set of edges $E$. 
Set $M:=\sum_{u\in V} m_0(u)$ and $D^2 := \sum_{uv\in E} d(uv)^2$. 

We consider the following optimization problem: 
\begin{Problem}\label{problem-l^2-constraint}
Over all maps $\varphi\colon V \to \R^{|V|}$ satisfying 
\begin{align}\label{constraint-l2}
&\sum_{u\in V} m_0(u) \| \varphi(u) \|^2 = M,\\
&\| \varphi(u) - \varphi(v) \| \leq d(uv),\quad \forall uv \in E,\nonumber 
\end{align}
where $\|\cdot \|$ is the Euclidean norm on $\R^{|V|}$, minimize the (squared) 
norm of the affine barycenter
\begin{align*}
\bary(\varphi) = \frac{1}{M} \sum_{u \in V} m_0(u) \varphi(u). 
\end{align*}
In other words, evaluate 
$$
\delta(G,m_0,d) = \inf_\varphi \|\bary(\varphi)\|^2. 
$$
\end{Problem}

%\fbox{CAT(0) coneの\deltaとの関係を記述する.}

Our first observation is that this problem is related to an optimization problem 
regarding the spectral gap of the Laplacian, introduced in \cite{GoringHelmbergWappler2} 
(see also \cite{GoringHelmbergWappler1}) and reviewed below. 
To define the Laplacian, we take a weight $m_1\colon E\rightarrow \R_{\geq 0}$ 
on the set of edges $E$. 
We assume that $G=(V, E')$ is connected, where $E' = \{ uv\in E \mid m_1(uv)> 0 \}$. 
%independently of the vertex-weight $m_0$.
%Let $\Hilbert$ be a Hilbert space, and 
Let $C(V,\R)$ denote the set of functions $\varphi\colon V\to \R$, equipped with 
the inner product defined by $\langle \varphi_1, \varphi_2 \rangle 
= \sum_{u\in V} m_0(u) \varphi_1(u) \varphi_2(u)$. 
Then the Laplacian $\Delta_{(m_0,m_1)}\colon C(V,\R)\to C(V,\R)$ is a nonnegative 
symmetric linear operator, defined by 
\begin{equation}\label{symmetric-normalized-Laplacian}
(\Delta_{(m_0,m_1)} \varphi) (u) = \frac{1}{m_0(u)} \left[ \left( 
\sum_{v\sim u} m_1(uv) \right) \varphi(u) 
- \sum_{v\sim u} m_1(uv) \varphi(v) \right],\quad u\in V, 
\end{equation} 
where we write $v\sim u$ if $uv\in E$. 
Note that $\Delta_{(m_0,m_1)}$ has eigenvalue $0$, and the corresponding eigenspace consists 
precisely of constant functions since $G$ is assumed to be connected. 
Therefore, the second smallest eigenvalue of $\Delta_{(m_0,m_1)}$ is positive; it is 
denoted by $\lambda_1(G, (m_0,m_1))$ and referred to as the first nonzero eigenvalue of 
$\Delta_{(m_0,m_1)}$.  
It is a standard fact that %at least when $m_1$ is strictly positive on $E$, 
$\lambda_1(G, (m_0,m_1))$ is characterized variationally as 
$$
\lambda_1(G, (m_0,m_1)) = \inf \frac{\sum_{uv\in E} m_1(uv) 
( \varphi(u) - \varphi(v) )^2}{\sum_{u\in V} m_0(u) ( \varphi(u)- \overline{\varphi} )^2},  
$$
where $\overline{\varphi} = \sum_{u\in V} m_0(u) \varphi(u)/M$ and the infimum 
is taken over all nonconstant functions $\varphi$. 
%and $\|\cdot \|$ is the norm of $\Hilbert$. 

%\medskip
%It is also well-known and easy to verify that $\lambda_1(G, (m_0,m_1), \Hilbert) 
%= \lambda_1(G,(m_0,m_1), \R)$, and henceforth we denote this common value by 
%$\lambda_1(G,(m_0,m_1))$. 

\begin{Remark}\label{remark-symmetric-normalized-Laplacian} 
The Laplacian \eqref{symmetric-normalized-Laplacian} 
%, when acting on $\R$-valued functions, 
essentially coincides with the one
%\emii{{\em symmetric normalized Laplacian} (comment: check name)} 
employed in \cite{GoringHelmbergWappler2}. 
In fact, write $V=\{ u_1,\dots, u_{|V|} \}$, and define a linear isometry 
$\pi\colon C(V,\R)\to \R^{|V|}$ by $(\pi(\varphi))_i = \sqrt{m_0(u_i)} \varphi(u_i)$ 
for $\varphi\in C(V,\R)$ and $i=1,\dots,|V|$. 
Then 
$$
(\pi\circ \Delta_{(m_0,m_1)}\circ \pi^{-1} (\psi))_i = \left( \frac{1}{m_0(u_i)} 
\sum_{u_j\sim u_i}m_1(u_i,u_j) \right) \psi_i - \sum_{u_j\sim u_i} \frac{m_1(u_i,u_j)} 
{\sqrt{m_0(u_i)m_0(u_j)}}\psi_j.
$$
\end{Remark}

%\begin{Remark}
%In graph theory, the unnormalized Laplacian, which we denote by $L_m$, is also widely used. 
%It is defined by 
%$$
%(L_m \varphi) (u) = m_0(u) \varphi(u) - \sum_{v\sim u} m_1([u,v]) \varphi(v),\quad \forall u\in V.  
%$$
%As in the case of $\Delta_m$, constant maps are eigenvectors of $L_m$ with eigenvalue $0$, 
%and the first nonzero eigenvalue $\lambda_1(G,m)$ of $L_m$, which does not depend on $\Hilbert$ again, 
%is characterized as
%$$
%\lambda_1(G,m) = \inf \frac{\sum_{[u,v]\in E} m_1([u,v]) \| \varphi(u)-\varphi(v) \|^2}{
%\sum_{u\in V} \| \varphi(u)- \overline{\varphi} \|^2} 
%$$
%if $m_1$ is strictly positive. 
%However, the normalized Laplacian $\Delta_m$ is more useful in formulating results like 
%Proposition \ref{prop-kondo-inequality} below. 
%\end{Remark}

We are ready to state the following 

\begin{Problem}[\cite{GoringHelmbergWappler2}] \label{problemKKNspec} 
Over all weights $m_1$ on $E$, subject to the normalization 
\begin{equation}\label{normalization-edge}
\sum_{uv\in E} m_1(uv)d(uv)^2=D^2,
\end{equation} 
maximize the first nonzero eigenvalue 
$\lambda_1(G,(m_0,m_1))$ of $\Delta_{(m_0,m_1)}$. 
That is, determine 
$$
\sigma(G, m_0,d) := \sup_{m_1} \lambda_1(G, (m_0,m_1)). 
$$
\end{Problem}

\begin{Remark}
When $m_0\equiv 1$ and $d\equiv 1$, $\sigma(G, m_0, d)$ is denoted by $\widehat{a}(G)$ 
and called the {\em absolute algebraic connectivity} of $G$ by Fiedler \cite{Fiedler}. 
\end{Remark}

The following proposition is the key to relating the two optimization problems. 
%We now show that the value $\delta(G,m_0)$ %norm of the barycenter of $\varphi$ 
%can be estimated from below in terms of $\mu_1(G,m)$.  

\begin{Proposition}\label{prop-kondo-inequality}
Let $G=(V,E)$ be a finite connected graph equipped with a vertex-weight $m_0$ 
and a distance parameter $d$. 
For an edge-weight $m_1$ satisfying \eqref{normalization-edge}, we have 
\begin{equation}\label{kondo-inequality2}
\delta(G,m_0,d)\geq 1 - \frac{D^2/M}{\lambda_1(G,(m_0,m_1))}. 
\end{equation}
%If further $m_0$ is subject to the normalization 
%\begin{equation}\label{normalization-vertex}
%\sum_{u\in V} m_0(u)=2|E|,
%\end{equation} 
%this becomes 
%\begin{equation}\label{kondo-inequality3}
%\delta(G,m_0)\geq 1 - \frac{1}{2\, \lambda_1(G,(m_0,m_1))}. 
%\end{equation}
In \eqref{kondo-inequality2}, the equality sign holds if and only if there exists 
$\varphi\colon V\to \R^{|V|}$ satisfying \eqref{constraint-l2} such that 
\begin{enumerate}
\renewcommand{\theenumi}{\roman{enumi}}
\renewcommand{\labelenumi}{(\theenumi)}
\item $m_1(uv)(d(uv)^2 - \| \varphi(u)-\varphi(v)\|^2)=0,\,\, \forall uv\in E$, 
\item $\Delta_{(m_0,m_1)}\varphi = \lambda_1(G,(m_0,m_1)) (\varphi - \bary(\varphi))$, 
that is, each component of the map $\varphi - \bary(\varphi)$ is an eigenvector 
of the eigenvalue $\lambda_1(G,(m_0,m_1))$ of the Laplacian $\Delta_{(m_0,m_1)}$. 
\end{enumerate}
\end{Proposition}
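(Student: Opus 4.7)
The plan is to combine the variational characterization of $\lambda_1(G,(m_0,m_1))$ applied componentwise to $\varphi$ with the edge length constraints. Write $\varphi=(\varphi_1,\dots,\varphi_{|V|})$ and for each $i$ denote $\overline{\varphi_i}=M^{-1}\sum_u m_0(u)\varphi_i(u)$, so that $(\overline{\varphi_1},\dots,\overline{\varphi_{|V|}})=\bary(\varphi)$. If $\varphi_i$ is nonconstant, the Rayleigh quotient characterization gives
\[
\lambda_1(G,(m_0,m_1))\sum_{u\in V}m_0(u)\bigl(\varphi_i(u)-\overline{\varphi_i}\bigr)^2\leq \sum_{uv\in E}m_1(uv)\bigl(\varphi_i(u)-\varphi_i(v)\bigr)^2,
\]
with equality if and only if $\varphi_i-\overline{\varphi_i}$ is an eigenfunction of $\Delta_{(m_0,m_1)}$ with eigenvalue $\lambda_1(G,(m_0,m_1))$; the inequality is trivial (both sides vanish) when $\varphi_i$ is constant, and in that case one still has $\Delta_{(m_0,m_1)}\varphi_i=\lambda_1(G,(m_0,m_1))(\varphi_i-\overline{\varphi_i})$.

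Summing over $i=1,\dots,|V|$ and using $\|\cdot\|^2=\sum_i(\cdot)_i^2$, I would obtain
\[
\lambda_1(G,(m_0,m_1))\sum_{u\in V}m_0(u)\|\varphi(u)-\bary(\varphi)\|^2\leq \sum_{uv\in E}m_1(uv)\|\varphi(u)-\varphi(v)\|^2.
\]
The left-hand side simplifies via the standard identity
\[
\sum_{u}m_0(u)\|\varphi(u)-\bary(\varphi)\|^2=\sum_u m_0(u)\|\varphi(u)\|^2-M\|\bary(\varphi)\|^2=M\bigl(1-\|\bary(\varphi)\|^2\bigr),
\]
where the first constraint of Problem~\ref{problem-l^2-constraint} is used. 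For the right-hand side, the edge length bound $\|\varphi(u)-\varphi(v)\|\leq d(uv)$ together with $m_1(uv)\geq 0$ and the normalization \eqref{normalization-edge} yields
\[
\sum_{uv\in E}m_1(uv)\|\varphi(u)-\varphi(v)\|^2\leq \sum_{uv\in E}m_1(uv)d(uv)^2=D^2.
\]
Combining these and dividing by $M\lambda_1(G,(m_0,m_1))$ gives precisely \eqref{kondo-inequality2}.

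For the equality discussion, I would trace back through the two inequalities used. The edge-length inequality is an equality exactly when $m_1(uv)\bigl(d(uv)^2-\|\varphi(u)-\varphi(v)\|^2\bigr)=0$ for every $uv\in E$, which is condition (i). The Rayleigh inequality, summed over $i$, is an equality if and only if each $\varphi_i-\overline{\varphi_i}$ is either zero or an eigenfunction for $\lambda_1(G,(m_0,m_1))$, which in both cases can be uniformly rephrased as $\Delta_{(m_0,m_1)}\varphi_i=\lambda_1(G,(m_0,m_1))(\varphi_i-\overline{\varphi_i})$; assembling the components gives condition (ii). I do not foresee a serious obstacle here: the only subtlety is the ``constant coordinate'' case in the equality analysis, which one must check separately to see that it is absorbed into condition (ii) without further assumption.
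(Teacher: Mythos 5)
Your argument is correct and is essentially the paper's own proof: the same chain consisting of the variance identity $\sum_u m_0(u)\|\varphi(u)-\bary(\varphi)\|^2=\sum_u m_0(u)\|\varphi(u)\|^2-M\|\bary(\varphi)\|^2$, the Poincar\'e/Rayleigh bound for $\lambda_1(G,(m_0,m_1))$ (which the paper applies vectorially rather than componentwise), and the edge constraints together with \eqref{normalization-edge}. Your tracing of the equality case, including the constant-component remark, matches what the paper leaves as ``clear''; the only point you leave implicit (as does the paper) is that when equality holds the infimum defining $\delta(G,m_0,d)$ is attained, which follows from compactness of the feasible set.
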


\begin{proof}
\begin{eqnarray*}
\| \bary(\varphi) \|^2 &=& \frac{1}{M} \sum_{u\in V} m_0(u) 
\| \varphi(u) \|^2 
- \frac{1}{M} \sum_{u\in V} m_0(u) \| \varphi(u) - \bary(\varphi) 
\|^2\\ 
&\geq& \frac{1}{M} \sum_{u\in V} m_0(u) \| \varphi(u) \|^2\\ 
&& - \frac{1}{M} \frac{1}{\lambda_1(G,(m_0,m_1))} \sum_{uv\in E} 
m_1(uv) \| \varphi(u)-\varphi(v) \|^2. 
\end{eqnarray*}
Since $\varphi$ obeys the constraints \eqref{constraint-l2}, the rightmost 
expression is 
\begin{eqnarray*}
&\geq& 1 - \frac{1}{M} \frac{1}{\lambda_1(G,(m_0,m_1))} \sum_{uv\in E} 
m_1(uv) d(uv)^2\\ 
&=& 1 - \frac{1}{M}\frac{D^2}{\,\lambda_1(G,(m_0,m_1))}. 
\end{eqnarray*}

The assertion on the equality case is clear.  
\end{proof}

\medskip
Since the left-hand sides of \eqref{kondo-inequality2} do not depend on $m_1$, 
we obtain 

\begin{Corollary}\label{cor-kondo-inequality}
Let $G=(V,E)$ be a finite connected graph equipped with a vertex-weight $m_0$ 
and a distance parameter $d$. 
Then we have 
\begin{equation}\label{kondo-inequality2'}
\delta(G,m_0,d)\geq 1 - \frac{D^2/M}{\sigma(G, m_0,d)}. 
\end{equation}
%If further $m_0$ is subject to the normalization \eqref{normalization-vertex}, 
%this becomes 
%\begin{equation}\label{kondo-inequality3'}
%\delta(G,m_0)\geq 1 - \frac{1}{2\, \sigma(G, m_0)}. 
%\end{equation}
In \eqref{kondo-inequality2'}, the equality sign holds if and only if there exist 
an edge-weight $m_1$ and $\varphi\colon V\to \R^{|V|}$ satisfying \eqref{constraint-l2} 
and the two conditions (i), (ii) as in the statement of Proposition \ref{prop-kondo-inequality}. 
\end{Corollary}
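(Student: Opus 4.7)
The plan is to derive the corollary from Proposition \ref{prop-kondo-inequality} by passing to the supremum over admissible edge-weights $m_1$. Observe that the left-hand side of \eqref{kondo-inequality2} does not depend on $m_1$, while the function $t\mapsto 1-(D^2/M)/t$ is strictly increasing on $(0,\infty)$. Therefore, Proposition \ref{prop-kondo-inequality} yields
\[
\delta(G,m_0,d) \geq \sup_{m_1}\left(1 - \frac{D^2/M}{\lambda_1(G,(m_0,m_1))}\right) = 1 - \frac{D^2/M}{\sigma(G,m_0,d)},
\]
where the supremum runs over all edge-weights $m_1$ satisfying \eqref{normalization-edge}. This is \eqref{kondo-inequality2'}.

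For the equality characterization, the ``if'' direction is immediate: given a pair $(m_1,\varphi)$ with $m_1$ normalized and conditions (i), (ii) verified, Proposition \ref{prop-kondo-inequality} gives equality in \eqref{kondo-inequality2} for this $m_1$, so
\[
\delta(G,m_0,d) = 1-\frac{D^2/M}{\lambda_1(G,(m_0,m_1))} \leq 1-\frac{D^2/M}{\sigma(G,m_0,d)} \leq \delta(G,m_0,d),
\]
forcing equality throughout; in particular, equality holds in \eqref{kondo-inequality2'} and, as a by-product, the supremum defining $\sigma$ is attained at this $m_1$.

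The ``only if'' direction is where the real work lies. It requires producing, under the hypothesis of equality in \eqref{kondo-inequality2'}, an edge-weight $m_1^*$ that actually attains $\sigma(G,m_0,d)$; once such $m_1^*$ is available, the chain of inequalities above forces equality in \eqref{kondo-inequality2} for $m_1^*$, and Proposition \ref{prop-kondo-inequality} supplies the corresponding $\varphi$. The main obstacle is therefore attainment of the supremum, which I plan to establish by a compactness argument: the normalized nonnegative edge-weights form a compact simplex, $\lambda_1(G,(m_0,m_1))$ is upper semicontinuous on this simplex (set to $0$ whenever $(V,E')$ fails to be connected), and any maximizer must keep $(V,E')$ connected, hence lies in the open stratum where $\lambda_1$ is continuous and positive.
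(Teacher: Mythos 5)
Your proof is correct and follows the paper's own route: the inequality is obtained, exactly as in the paper, by taking the supremum over normalized edge-weights $m_1$ in \eqref{kondo-inequality2}, and the equality case is reduced to the equality characterization of Proposition \ref{prop-kondo-inequality}. Your compactness argument for the attainment of $\sigma(G,m_0,d)$ (needed in the ``only if'' direction) is a sound way to supply a step the paper leaves implicit; the authors instead have it available from Theorem \ref{theoremGHW}, where attainment of the optimal values is part of the cited G\"oring--Helmberg--Wappler duality result.
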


\begin{Remark}
The conditions for the equality case in Proposition \ref{prop-kondo-inequality} 
and Corollary \ref{cor-kondo-inequality} coincide with the so-called KTT conditions 
associated with Problems \ref{problem-l^2-constraint} and \ref{problemKKNspec} 
which are shown to be dual to each other in \S 3. 
\end{Remark}

%\begin{Remark}
%An edge-weight $m_1\colon E\rightarrow \R_{\geq 0}$ induces a vertex-weight 
%$m_0\colon V\rightarrow \R_{\geq 0}$ by 
%\begin{equation}\label{admissible}
%m_0(u) = \sum_{v\sim u} m_1(uv),\quad u\in V. 
%\end{equation}

%\medskip\noindent
%$\%$ To be revised. 
%\end{Remark}

\begin{Example}
Let $G_p$ be the incidence graph of the projective plane ${\bf P}^2(\F_p)$ over the field 
$\F_p=\Z/p\Z$, where $p$ is a prime number. 
Since ${\bf P}^2(\F_p)$ has $p^2+p+1$ lines and $p^2+p+1$ points with $p+1$ points on every line 
and $p+1$ lines through every point, $G_p$ is a $(p+1)$-regular bipartite graph with $2(p^2+p+1)$ 
vertices. Note also that $G_p$ has diameter $3$. 
Define weights $m_0, m_1$ and a distance parameter $d$ by 
\begin{align*}
& m_0(u) = p+1,\quad \forall u\in V,\\ 
& m_1(uv) = 1,\, d(uv) =1\quad \forall uv \in E,
\end{align*} 
so that the normalization \eqref{normalization-edge} is satisfied and $D^2/M=1/2$. 
By a result of Feit and Higman \cite{FeitHigman}, we have $\lambda_1(G_p,(m_0,m_1)) 
= 1 - \frac{\sqrt{p}}{p+1}$, and therefore 
$$
1 - \frac{D^2/M}{\lambda_1(G_p,(m_0,m_1)) } = \frac{p+1-2\sqrt{p}}{2(p+1-\sqrt{p})}.
$$
On the other hand, Problem \ref{problem-l^2-constraint} for $G_p$ is solved in 
\cite{IzekiNayatani}, and the solution $\varphi$ satisfies 
$$
\langle \varphi(u), \varphi(v) \rangle = \left\{\begin{array}{ccc} 
\frac{1}{2} &\mbox{if}& d_{G_p}(u,v)=1,\\ 
\frac{p-1-\sqrt{p}}{2p} &\mbox{if}& d_{G_p}(u,v)=2,\\ 
\frac{p^2-p-(p+1)\sqrt{p}}{2p^2} &\mbox{if}& d_{G_p}(u,v)=3, 
\end{array} \right. 
$$
where $d_{G_p}$ is the combinatorial distance on $V$. 
It follows that 
$$
\delta(G,m_0,d) = \left\| \frac{1}{M} \sum_{u\in V} m_0(u) \varphi(u) \right\|^2 
%&=& \left(\frac{1}{2(p^2+p+1)}\right)^2 \sum_{(u,v)\in V\times V} \langle \varphi(u), \varphi(v) \rangle\\ 
%&=& \left(\frac{1}{2(p^2+p+1)}\right)^2 \left( 2(p^2+p+1) + 2(p^2+p+1)\cdot (p+1)\cdot \frac{1}{2} \right.\\ 
%&& + 2(p^2+p+1)\cdot (p^2+p)\cdot \frac{p-1-\sqrt{p}}{2p}\\ 
%&& \left. + 2(p^2+p+1)\cdot p^2\cdot \frac{p^2-p-(p+1)\sqrt{p}}{2p^2} \right)\\
%&=& \frac{1}{4(p^2+p+1)} ( 2 + (p+1) + (p+1)(p-1-\sqrt{p}) + (p^2-p-(p+1)\sqrt{p}) )\\ 
= \frac{p^2+1-(p+1)\sqrt{p}}{2(p^2+p+1)} 
= \frac{p+1-2\sqrt{p}}{2(p+1-\sqrt{p})}. 
%&=& 1 - \frac{p+1}{2(p+1-\sqrt{p})}
$$
Thus the equality sign holds in \eqref{kondo-inequality2} (and hence in \eqref{kondo-inequality2'}). 
In particular, when the vertex-weight $m_0\equiv p+1$ and the distance parameter $d\equiv 1$ 
are fixed, the choice of edge-weight $m_1\equiv 1$ maximizes the spectral gap 
$\lambda_1(G_p,(m_0,m_1))$ among all those subject to the normalization \eqref{normalization-edge}, 
and $\sigma(G, m_0, d) = 1 - \frac{\sqrt{p}}{p+1}$. 
\end{Example}

\section{Relation to other optimization problems}
In \cite{GoringHelmbergWappler2} (see also \cite{GoringHelmbergWappler1}) 
an optimization problem similar to Problem \ref{problem-l^2-constraint} is considered. 
%to maximize the first nonzero eigenvalue of the unnormalized Laplacian is considered. 
%It is analogous to ours in \S 1 but slightly different. 
Again, the problem is concerned with graph-embeddings, and very importantly it is dual to 
Problem \ref{problemKKNspec}. 
In this section, after reviewing this duality, we discuss how Problem \ref{problem-l^2-constraint} 
is related to  the one in \cite{GoringHelmbergWappler2}. 
(In fact, our Problem \ref{problem-l^2-constraint} is also dual to Problem \ref{problemKKNspec}.  
This will be discussed in \S 3.) 

Let $G=(V,E)$ be a finite connected graph equipped with a vertex-weight 
$m_0\colon V\rightarrow \R_{> 0}$ and a distance parameter $d\colon E\to \R_{>0}$. 
%Let 
%$$
%\mathcal{M} := \left\{ m_1\colon E\rightarrow \R_{\geq 0} \mid 
%\sum_{uv\in E} m_1(uv) = |E| \right\}, 
%$$
%that is, $\mathcal{M}$ is the set of all possible weights on $E$ subject to the normalization 
%\eqref{normalization-edge}. 
%Notice that our normalization for $m_0$ is different from that in \cite{GoringHelmbergWappler1, 
%GoringHelmbergWappler2}, where $m_0$ should sum up to $|V|$ \emii{(??)}; 
%we adopt the above one in order to be consistent with our consideration in \S 1. 

\begin{Problem}[\cite{GoringHelmbergWappler2}]\label{problemGHWemb}
Over all maps $\varphi\colon V \to \R^{|V|}$ satisfying 
\begin{align}\label{constraintGHW}
&		\sum_{u\in V} m_0(u) \varphi(u) = 0,\\
&		\| \varphi(u) - \varphi(v) \| \leq d(uv),\quad \forall uv \in E,\nonumber 
\end{align}
maximize 
\begin{align*}
\frac{1}{M}\sum_{u \in V} m_0(u) \| \varphi(u) \|^2. 
\end{align*}
That is, evaluate 
$$
\nu(G,m_0,d) := \sup_{\varphi} \frac{1}{M}\sum_{u \in V} m_0(u) \| \varphi(u) \|^2. 
$$
\end{Problem}

%\begin{Problem}[\cite{GoringHelmbergWappler1}]\label{problemGHWspec}
%Over all $m_1\in \mathcal{M}$, maximize the first nonzero eigenvalue $\lambda_1(G,(m_0,m_1))$ 
%of $\Delta_{(m_0,m_1)}$. 
%That is, determine the {\rm absolute algebraic connectivity} 
%$$
%\hat{a}(G,m_0) := \sup_{m_1\in \mathcal{M}} \lambda_1(G,(m_0,m_1)). 
%$$
%\end{Problem}

It is shown in \cite{GoringHelmbergWappler1} that Problem \ref{problemGHWemb} is dual to 
Problem \ref{problemKKNspec}.
For the precise formulation of this duality, we refer the reader to 
\cite[pp.\! 474-475]{GoringHelmbergWappler1}. 
By semidefinite duality theory together with strict feasibility, they deduce that the optimal 
values of the two problems (are attained and) coincide. 
We record this fact as 

\begin{Theorem}[\cite{GoringHelmbergWappler1}]\label{theoremGHW}
For any finite connected graph $G=(V,E)$ equipped with a vertex-weight 
$m_0\colon V\rightarrow \R_{> 0}$ and a distance parameter $d\colon E\to \R_{>0}$, 
we have
\begin{equation}\label{duality}
\nu(G,m_0,d) = \frac{D^2/M}{\sigma(G,m_0,d)}.
\end{equation}
\end{Theorem}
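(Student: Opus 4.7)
The plan is to recast Problem \ref{problemGHWemb} as a semidefinite program in the Gram matrix of $\varphi$, compute its Lagrangian dual, identify that dual with a rescaled form of Problem \ref{problemKKNspec}, and conclude by strong duality.

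First I would introduce the Gram matrix $X=(X_{uv})$ with $X_{uv}=\langle\varphi(u),\varphi(v)\rangle$. As $\varphi$ varies over maps $V\to\R^{|V|}$, $X$ ranges over all positive semidefinite symmetric matrices indexed by $V$. The objective becomes $\frac{1}{M}\operatorname{tr}(M_0 X)$ with $M_0=\operatorname{diag}(m_0)$; the barycenter condition becomes the linear equality $m_0^{T}Xm_0=0$; and each edge condition becomes $X_{uu}+X_{vv}-2X_{uv}\leq d(uv)^2$. Introducing Lagrange multipliers $m_1(uv)\geq 0$ for the edge inequalities and $\tau\in\R$ for the barycenter equality, and observing that $\sum_{uv\in E}m_1(uv)(X_{uu}+X_{vv}-2X_{uv})=\operatorname{tr}(L(m_1)X)$, where $L(m_1)$ is the combinatorial Laplacian with edge-weights $m_1$, a routine computation yields the dual
\[
\text{minimize }\sum_{uv\in E} m_1(uv)\,d(uv)^2 \ \text{ subject to }\ L(m_1)+\tau\, m_0 m_0^T\succeq \tfrac{1}{M}M_0,\ m_1\geq 0,\ \tau\in\R.
\]

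Next I would simplify the dual and match it with Problem \ref{problemKKNspec}. Splitting $\varphi\in\R^V$ as $\varphi_0+c\mathbf{1}$ with $m_0^T\varphi_0=0$, and using $L(m_1)\mathbf{1}=0$, the matrix inequality decouples into a condition on $m_0^\perp$ equivalent to $\lambda_1(G,(m_0,m_1))\geq 1/M$, together with the scalar $\tau M^2\geq 1$. Thus the dual is solvable in $\tau$ for a given $m_1$ precisely when $\lambda_1(G,(m_0,m_1))\geq 1/M$. Using the homogeneity $\lambda_1(G,(m_0,cm_1))=c\lambda_1(G,(m_0,m_1))$, any dual-feasible $m_1$ with $\sum_{uv}m_1 d^2=S$ rescales to $m_1':=(D^2/S)m_1$ satisfying \eqref{normalization-edge} and $\lambda_1(m_1')\geq D^2/(SM)$, forcing $S\geq D^2/(M\sigma(G,m_0,d))$; conversely any near-maximizer of $\lambda_1$ in Problem \ref{problemKKNspec} rescales to a dual-feasible point of value close to $D^2/(M\sigma(G,m_0,d))$. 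Hence the dual optimum equals $D^2/(M\sigma(G,m_0,d))$.

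Finally I would invoke SDP strong duality to identify this dual optimum with $\nu(G,m_0,d)$. The hard part will be verifying Slater's condition cleanly, because the equality $m_0^TXm_0=0$ combined with $X\succeq 0$ forces $Xm_0=0$, so no primal-feasible $X$ is strictly positive definite on $\R^V$. The remedy is to work intrinsically on the $(|V|-1)$-dimensional subspace $V_0=\{\varphi\in\R^V:m_0^T\varphi=0\}$ and its induced positive semidefinite cone; there the zero map lies in the relative interior of the primal feasible region (all edge inequalities become strict), while on the dual side choosing any $m_1>0$ on $E$ together with $\tau$ sufficiently large produces a strictly feasible point. Standard SDP strong duality then gives attainment on both sides and no duality gap, which combined with the matching above yields the desired identity $\nu(G,m_0,d)=D^2/(M\sigma(G,m_0,d))$.
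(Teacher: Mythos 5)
The paper itself does not prove this theorem: it only records the result of G\"oring--Helmberg--Wappler and points to their semidefinite-programming duality argument, which is exactly the route you take, so your proposal is essentially the cited proof carried out in detail. Your dual derivation is correct: the Gram-matrix SDP, the identity $\sum_{uv\in E} m_1(uv)(X_{uu}+X_{vv}-2X_{uv})=\mathrm{tr}(L(m_1)X)$, the decoupling of the constraint $L(m_1)+\tau m_0m_0^T\succeq \tfrac1M M_0$ along $\varphi=\varphi_0+c\mathbf{1}$ with $m_0^T\varphi_0=0$ (the cross terms vanish because $L(m_1)\mathbf{1}=0$ and $m_0^T\varphi_0=0$), and the homogeneity/rescaling argument identifying the dual optimum with $D^2/(M\sigma(G,m_0,d))$ all check out. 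One inaccuracy worth correcting: the zero map is \emph{not} a valid Slater point for the reduced primal --- the zero matrix is the apex of the positive semidefinite cone, hence not in the relative interior of the feasible region; the correct strictly feasible point is a small positive multiple of the identity on $V_0$ (equivalently, a tiny embedding with $m_0$-barycenter zero whose image spans $V_0$), for which the edge constraints are strict. This slip is harmless for the identity \eqref{duality}: the dual-side strictly feasible point you exhibit ($m_1>0$ together with $\tau$ large, using connectedness) already yields zero duality gap (and primal attainment), and since you compute the dual infimum directly by rescaling, dual attainment --- the only thing primal Slater would add --- is not needed for the value equality.
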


\begin{Remark}\label{inequality_part_of_theoremGHW}
The inequality 
\begin{equation}\label{inequality_part_of_duality}
\nu(G,m_0,d) \leq \frac{D^2/M}{\sigma(G,m_0,d)} 
\end{equation}
is an analogue of \eqref{kondo-inequality2'} and can be proved by a similar argument. 
Indeed, if $\varphi\colon V \to \R^{|V|}$ is a map satisfying the constraints 
\eqref{constraintGHW}, then 
\begin{eqnarray}\label{emb_spec}
\sum_{u\in V} m_0(u) \| \varphi(u) \|^2 &=&  \sum_{u\in V} m_0(u) \| \varphi(u) - \overline{\varphi} \|^2\\ 
&\leq& \frac{1}{\lambda_1(G,(m_0,m_1))} \sum_{uv\in E} m_1(uv) \| \varphi(u)-\varphi(v) \|^2
\nonumber\\ 
&\leq& \frac{D^2}{\lambda_1(G,(m_0,m_1))}.\nonumber 
\end{eqnarray}
Therefore, \eqref{inequality_part_of_duality} follows. 

Let $m_1$ and $\varphi$ be optimal solutions for Problems \ref{problemKKNspec} 
and \ref{problemGHWemb}, respectively.
Then the inequality signs in \eqref{emb_spec} become equalities, and hence each component 
of $\varphi$ has to be an eigenvector of the eigenvalue $\lambda_1(G,(m_0,m_1))$ 
of $\Delta_{(m_0,m_1)}$. 
This verifies Remark 3.3 on p.\! 292 of \cite{GoringHelmbergWappler1}. 
\end{Remark}

By combining \eqref{kondo-inequality2'} and \eqref{duality}, we obtain 
$$
\delta(G,m_0,d)\geq 1 - \frac{D^2/M}{\sigma(G, m_0,d)} = 1 - \nu(G,m_0,d). 
$$
The following proposition gives a more precise relation between 
Problems \ref{problem-l^2-constraint} and \ref{problemGHWemb} concerning optimal embeddings. 

\begin{Proposition}\label{relation_prob_emb}
For any finite connected graph $G=(V,E)$ equipped with a vertex-weight 
$m_0\colon V\rightarrow \R_{> 0}$, 
%subject to the normalization \eqref{normalization-vertex}. 
we have
\begin{equation}\label{formula_prob_emb}
\delta(G.m_0,d) = \max\left\{ 1 - \nu(G,m_0,d), 0 \right\}.
\end{equation}
\end{Proposition}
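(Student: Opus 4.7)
The plan is to prove the two inequalities $\delta(G,m_0,d) \geq \max\{1-\nu(G,m_0,d), 0\}$ and $\delta(G,m_0,d) \leq \max\{1-\nu(G,m_0,d), 0\}$ separately.

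For the lower bound I would first note that $\delta(G,m_0,d)\geq 0$ is immediate from its definition as the infimum of squared norms. Then, combining Corollary \ref{cor-kondo-inequality} with the duality identity from Theorem \ref{theoremGHW} yields
$$\delta(G,m_0,d) \;\geq\; 1 - \frac{D^2/M}{\sigma(G,m_0,d)} \;=\; 1 - \nu(G,m_0,d),$$
and taking the larger of these two lower bounds gives the desired inequality.

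For the upper bound the idea is to take an optimal solution $\varphi^{*}$ for Problem \ref{problemGHWemb} (its existence is guaranteed by Theorem \ref{theoremGHW}) and to modify it into a map that is feasible for Problem \ref{problem-l^2-constraint}. Two complementary modifications are used. If $\nu \leq 1$, I set $\varphi = \varphi^{*} + c$ for a constant vector $c\in\R^{|V|}$ with $\|c\|^2 = 1-\nu$. A pure translation preserves every edge length, and because $\sum_{u}m_0(u)\varphi^{*}(u)=0$ the cross term in the expansion vanishes, giving
$$\sum_{u\in V} m_0(u)\|\varphi(u)\|^2 \;=\; M\nu + M\|c\|^2 \;=\; M.$$
Hence $\varphi$ is feasible for Problem \ref{problem-l^2-constraint}, and its barycenter equals $c$, so $\delta \leq \|c\|^2 = 1-\nu$. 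If instead $\nu \geq 1$, I rescale: $\varphi = \varphi^{*}/\sqrt{\nu}$ has edge lengths bounded by $d(uv)/\sqrt{\nu} \leq d(uv)$, has vanishing barycenter, and automatically satisfies the normalization $\sum_u m_0(u)\|\varphi(u)\|^2 = M$. Thus $\delta \leq 0$, which combined with $\delta \geq 0$ forces $\delta = 0 = \max\{1-\nu,0\}$.

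There is no serious obstacle in this argument; it is really just an observation about how the feasible sets of the two embedding problems relate. The only point to watch is that the two natural modifications live in complementary regimes: translation can adjust the weighted radius to the required value only when $1-\nu\geq 0$, while rescaling respects the edge constraint only when $\nu\geq 1$. These regimes overlap at $\nu=1$, where both constructions consistently give $\delta = 0$, so the two cases patch together to cover all possibilities.
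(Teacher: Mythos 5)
Your argument is correct, and your upper-bound half is essentially identical to the paper's: translate an optimal $\varphi^{*}$ of Problem \ref{problemGHWemb} by a vector of squared norm $1-\nu$ when $\nu\leq 1$, and rescale by $1/\sqrt{\nu}$ when $\nu\geq 1$, exactly as in the text. Where you diverge is the lower bound. The paper proves $\delta(G,m_0,d)\geq 1-\nu(G,m_0,d)$ by an elementary construction internal to the two embedding problems: take a feasible (optimal) $\varphi$ for Problem \ref{problem-l^2-constraint}, subtract its barycenter to get a map $\psi$ feasible for Problem \ref{problemGHWemb}, and compute $\sum_u m_0(u)\|\psi(u)\|^2 = M(1-\delta)$, so that $\nu\geq 1-\delta$. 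You instead route through the spectral side, invoking Corollary \ref{cor-kondo-inequality} together with the G\"oring--Helmberg--Wappler no-duality-gap result (Theorem \ref{theoremGHW}) to get $\delta\geq 1-(D^2/M)/\sigma = 1-\nu$. This is logically sound (no circularity, since Theorem \ref{theoremGHW} is an external result, and the paper itself records this combination just before the proposition), but it makes the proposition depend on the semidefinite-programming duality theorem, whereas the paper's proof is self-contained and purely about how the feasible sets of the two embedding problems transform into one another; that independence is what lets the paper then combine the proposition with Theorem \ref{theoremGHW} to obtain Corollary \ref{cor-kondo-inequality_improved} as a genuine strengthening of \eqref{kondo-inequality2'}. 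A minor further difference: your lower bound needs no attainment of the infimum defining $\delta$, while both your and the paper's upper bound use attainment of the supremum in Problem \ref{problemGHWemb}, which Theorem \ref{theoremGHW} supplies (or which one can avoid by an approximation argument).
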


\begin{proof}
Let $\varphi$ be an optimal solution of Problem \ref{problem-l^2-constraint}. 
Then $\psi=\varphi - \overline{\varphi}$ satisfies the constraints \eqref{constraintGHW}
of Problem \ref{problemGHWemb}. 
Since
\begin{eqnarray*}
\sum_{u \in V} m_0(u) \| \psi (u) \|^2 &=& \sum_{u \in V} m_0(u) \| \varphi(u) \|^2 
- M \|\overline{\varphi} \|^2\\
&=& M ( 1 - \delta(G,m_0,d) ), 
\end{eqnarray*}
we obtain 
$$
\nu(G,m_0,d)\geq 1 - \delta(G,m_0,d),\,\, \mbox{or}\,\,\, \delta(G,m_0,d) \geq 1 - \nu(G,m_0,d). 
$$

The other way around, let $\varphi$ be an optimal solution of Problem \ref{problemGHWemb}. 
We treat the following two cases separately: (i) $\nu(G,m_0,d)>1$, 
(ii) $\nu(G,m_0,d)\leq 1$. 
In case (i), 
$$
\psi = \sqrt{1 /\nu(G,m_0,d)}\, \varphi
$$
satisfies the constraints \eqref{constraint-l2} of Problem \ref{problem-l^2-constraint}.
Since 
$$
\left\|\frac{1}{M}\sum_{u\in V} m_0(u) \psi (u)\right\|^2 
= \frac{1}{M^2 \nu(G,m_0,d)} \left\|\sum_{u\in V} m_0(u) \varphi(u)\right\|^2 = 0, 
$$
we obtain $\delta(G,m_0,d) = 0$. 
In case (ii), define $\psi$ by 
$$ 
\psi(u) = \varphi(u) + \sqrt{1 - \nu(G,m_0,d))}\, e,\quad 
u\in V,
$$
where $e$ is any unit vector in $\R^{|V|}$. 
Then $\psi$ satisfies the constraints \eqref{constraint-l2} of Problem \ref{problem-l^2-constraint}, and 
$$
\left\|\frac{1}{M}\sum_{u\in V} m_0(u) \psi (u)\right\|^2 
= 1 - \nu(G,m_0,d). 
$$
Therefore, 
$$
\delta(G,m_0,d)\leq 1 - \nu(G,m_0,d). 
$$
We may now conclude \eqref{formula_prob_emb}. 
\end{proof}

Combining Proposition \ref{relation_prob_emb} with Theorem \ref{theoremGHW}, 
we obtain the following 

\begin{Corollary}\label{cor-kondo-inequality_improved}
Let $G=(V,E)$ be a finite connected graph equipped with a vertex-weight $m_0$ 
and a distance parameter $d$. 
Then we have 
\begin{equation}\label{formula_prob_emb}
\delta(G.m_0,d) = \max\left\{ 1 - \frac{D^2/M}{\sigma(G,m_0,d)}, 0 \right\}.
\end{equation}
\end{Corollary}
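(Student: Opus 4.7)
The plan is essentially a one-line substitution, but let me lay it out carefully. The statement to prove, Corollary 2.5, asks for a formula for $\delta(G,m_0,d)$ purely in terms of the spectral quantity $\sigma(G,m_0,d)$. Two earlier results already do most of the work: Proposition 2.4 gives the identity $\delta(G,m_0,d) = \max\{1 - \nu(G,m_0,d),\, 0\}$, relating our embedding optimum to the G\"oring--Helmberg--Wappler embedding optimum, and Theorem 2.2 identifies the latter with $\nu(G,m_0,d) = (D^2/M)/\sigma(G,m_0,d)$ via the no-duality-gap statement of \cite{GoringHelmbergWappler1}. So the only step is to substitute the second identity into the first.

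Before writing this down I would briefly check that the substitution is meaningful, i.e.\ that $\sigma(G,m_0,d)$ does not vanish in the denominator. This is immediate: since $G$ is connected, for any admissible edge-weight $m_1$ (and at least one exists, by setting $m_1$ proportional to $1/d^2$ on $E'$) the Laplacian $\Delta_{(m_0,m_1)}$ has a positive first nonzero eigenvalue, so $\sigma(G,m_0,d)>0$. The ``max with zero'' in the final formula exactly corresponds to the threshold $\nu(G,m_0,d) \le 1$ versus $\nu(G,m_0,d) > 1$ that was already handled case-by-case in the proof of Proposition 2.4, so nothing new needs to be addressed there.

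With these remarks, the proof reduces to writing
\begin{equation*}
\delta(G,m_0,d) \;=\; \max\!\left\{ 1 - \nu(G,m_0,d),\, 0 \right\}
\;=\; \max\!\left\{ 1 - \frac{D^2/M}{\sigma(G,m_0,d)},\, 0 \right\},
\end{equation*}
invoking Proposition 2.4 for the first equality and Theorem 2.2 for the second. There is no genuine obstacle here; the substantive content lies entirely in the earlier results, most notably in the semidefinite duality argument behind Theorem 2.2 and in the case analysis of Proposition 2.4 that produced the max with zero.
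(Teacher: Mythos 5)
Your proposal is correct and matches the paper's own argument exactly: the paper also obtains the corollary by substituting the identity $\nu(G,m_0,d) = (D^2/M)/\sigma(G,m_0,d)$ from Theorem \ref{theoremGHW} into the formula of Proposition \ref{relation_prob_emb}. The extra remark that $\sigma(G,m_0,d)>0$ is a harmless (and valid) sanity check.
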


Notice that \eqref{formula_prob_emb} improves the inequality \eqref{kondo-inequality2'} 
of Corollary \ref{cor-kondo-inequality}.

\section{Optimal embeddings of semi-regular polyhedra}
In this section, we consider graphs isomorphic to the one-skeltons of regular and semi-regular polyhedra, and decide their optimal embeddings for Problem \ref{problemGHWemb}. 
It will turn out that the resulting embeddings obtained as the optimal solutions of Problem 
\ref{problemGHWemb} coincide with those realizing the graphs as one-skeltons of the given polyhedra. 

\subsection{Platonic solids}

The Platonic solids are the five regular convex polyhedra:
the regular tetrahedron, the regular hexahedron, the regular octahedron, the regular dodecahedron and the regular icosahedron. 

We discuss the dodecahedron in detail. 
The other polyhedra can be handled similarly.
Let $C_{20} = (V,E)$ be a graph isomorphic to the one-skelton of the dodecahedron, 
which has $20$ vertices and $30$ edges. 
Let parameters $m_0$, $d$ be uniform ones: $m_0\equiv 1$, $d\equiv 1$.
We verify that the optimal embedding of $C_{20}$ realizes it as the one-skelton of the regular 
dodecahedron. 
In fact, if we choose $m_1$ uniform, that is, $m_1\equiv 1$, then the first nonzero eigenvalue 
of the corresponding Laplacian is computed as $\lambda_2 (C_{20}, (m_0, m_1)) = 3 - \sqrt{5}$. 

On the other hand, for the regular dodecahedron with edge length one, the radius of its 
circumscribed sphere is $(\sqrt{15} + \sqrt{3})/4$. 
Therefore, this feasible solution has  
$30/[20((\sqrt{15} + \sqrt{3})/4)^2] = 3 - \sqrt{5}$, the same value as above, 
as the objective value of the embedding problem. 
Thus we conclude that the optimal embedding of $C_{20}$ gives the one-skelton of the regular dodecahedron. 

Similar results are obtained for the other four regular polyhedra.
The optimal values of Problem \ref{problemKKNspec} for these polyhedra with the same choices 
of parameters are listed in Table \ref{maxspectralgap_Platonicsolids}.

\begin{table}[htb]
 \caption{Maximum spectral gaps for the Platonic solids}
\label{maxspectralgap_Platonicsolids}
\centering
  \begin{tabular}{|c|c|} \hline
Regular polyhedron & Maximum spectral gap \\ \hline \hline
Tetrahedron & $4$ \\ \hline
Hexahedron & $2$ \\ \hline
Octahedron & $4$ \\ \hline
Dodecahedron & $3 - \sqrt{5}$ \\ \hline
Icosahedron & $5 - \sqrt{5}$ \\ \hline
  \end{tabular}
\end{table}

\subsection{Fullerene $C_{60}$}
Let $C_{60} = (V,E)$ denote a graph isomorphic to the one-skelton of a truncated 
icosahedron which is also called a buckyball. 
$C_{60}$ has 60 vertices and $90$ edges, and $60$ of the edges are pentagonal edges 
and $30$ of them are hexagonal ones. 
Here, an edge is called {\em pentagonal} it it is on the boundary of a pentagonal face; 
otherwise, it is called {\em hexagonal}. 
Let the vertex weight $m_0$ be the uniform one: $m_0\equiv 1$. 
Choose the edge weight $m_1$ as 
\begin{eqnarray*}
m_1 (uv) = \left\{
 \begin{array}{ll}
x, \quad  \text{if} \ uv \ \text{is a pentagonal edge}, \\
y, \quad  \text{if} \ uv \ \text{is a hexagonal edge}. 
 \end{array}
 \right.
\end{eqnarray*}
Then by a result of \cite{Chung}, the first nonzero eigenvalue of the Laplacian 
for the above vertex and edge weights is 
\begin{eqnarray*}
\lambda_1 (G, (m_0, m_1)) &=& (2x+y) \\
& & -\frac{x}{4} \left( 3 + \sqrt{5} + \sqrt{2} \sqrt{15-5\sqrt{5}-4t+4\sqrt{5}t+8t^2} \right) \Bigl|_{t=\frac{y}{x}}.
\end{eqnarray*}

We begin with the case that the edge parameter $d$ is uniform: $d\equiv 1$. 
The circumscribed sphere of the truncated icosahedron with edge length one 
has radius $\sqrt{58+18\sqrt{5}}/4$.
Therefore, the objective value of the problem (\ref{problemGHWemb}) for this 
embedding is 
$$
60\left(\frac{\sqrt{58+18\sqrt{5}}}{4}\right)^2=\frac{15}{2} (29+9\sqrt{5}).
$$
On the other hand, the choice of $m_1$ with 
\begin{eqnarray*}
x = \frac{1}{218} (189 + 9 \sqrt{5}), \quad y = \frac{1}{109} (138 - 9 \sqrt{5})
\end{eqnarray*}
satisfies the normalization \eqref{normalization-edge} of Problem \ref{problemKKNspec}. 
The objective value for this feasible solution is $(87-27\sqrt{5})/109$, and 
$$
\frac{D^2/M}{(87-27\sqrt{5})/109}%=\frac{90}{(87-27\sqrt{5})/109}}
=\frac{15}{2} (29+9\sqrt{5}).
$$ 
Therefore, the one-skelton of the truncated icosahedron is realized by an optimal embedding.

We now consider the case that the distance parameter $d$ is given by 
\begin{eqnarray*}
d(uv) = \left\{
 \begin{array}{ll}
a, \quad \text{if} \ uv \ \text{is a pentagonal edge}, \\
b, \quad \text{if} \ uv \ \text{is a hexagonal edge}.
 \end{array}
 \right.
\end{eqnarray*}
It is reasonable to expect that the one-skelton of the truncated icosahedron in which the ratio 
of the length of a pentagonal edge to that of a hexagonal edge is $a : b$ is obtained as an 
optimal embedding.
The barycenter of this truncated icosahedron is at the origin again, and the objective value 
for this feasible solution is
\begin{equation} \label{objective value of C_{60}}
\frac{15}{2} a^2 \left\{ (5+\sqrt{5})s^2 +(4\sqrt{5} + 12)(s + 1) \right\},
\end{equation}
where $s= b/a$. 
(Note that this value coincides with the one in the previous case that $a=b=1$.) 

A feasible solution for Problem \ref{problemKKNspec} with the parameter $d$ is found as
\begin{eqnarray*}
x = \frac{\left( 2a^2 + b^2 \right) \left( (6 + 2 \sqrt{5}) a + (3 + \sqrt{5}) b \right)}{a \left( \left( 12+ 4 \sqrt{5} \right) a^2 + \left( 12+ 4 \sqrt{5} \right) ab + \left( 5 +\sqrt{5} \right) b^2 \right)} , \\
y = \frac{\left( 2a^2 + b^2 \right) \left( (6 + 2 \sqrt{5}) a + (5 + \sqrt{5}) b \right)}{b \left( \left( 12+ 4 \sqrt{5} \right) a^2 + \left( 12+ 4 \sqrt{5} \right) ab + \left( 5 +\sqrt{5} \right) b^2 \right)}.
\end{eqnarray*}
The objective value for this feasible solution is
$$A := \frac{4(2a^2 + b^2)}{\left( 12+ 4 \sqrt{5} \right) a^2 + \left( 12+ 4 \sqrt{5} \right) ab + \left( 5 +\sqrt{5} \right) b^2},$$
and
$$
\frac{D^2/M}{A}=\frac{15}{2} a^2 \left\{ (5+\sqrt{5})s^2 +(4\sqrt{5} + 12)(s + 1) \right\}.
$$ 
Since the objective values coincide, we get the expected result.

\subsection{Other Archimedean solids}
Archimedean solids are convex polyhedra all of whose faces are regular polygons, 
and which have a symmetry group acting transitively on the vertices. 
(Note, however, that the prisms, antiprisms and five Platonic solids are excluded.) 
Archimedean solids are classified and identified by the vertex configuration which refers to polygons 
that meet at any vertex. 
For example, a truncated icosahedron is denoted by $(5,6,6)$.

Let $G$ be the one-skelton of a truncated icosidodecahedron $(4,6,10)$.
And let an edge weight $m_1$ be given by 
\begin{eqnarray*}
m_1 (uv) &=& \left\{
 \begin{array}{ll}
x, \quad \text{if $uv$ separates $4$- and $6$-gons}, \\
y, \quad \text{if $uv$ separates $4$- and $10$-gons}, \\
z, \quad \text{if $uv$ separates $6$- and $10$-gons}, 
 \end{array}
 \right.
\end{eqnarray*}
where $x,y,z$ satisfy $x+y+z=1$. 
In \cite{IvrissimtzisPeyerimhoff} the optimization problem minimizing the second largest eigenvalue 
of the weighted adjacency matrix over all edge weights $m_1$ of the above form is solved, and 
$(179+24\sqrt{5})/241$ is obtained as the optimal value.
By choosing parameters $m_0\equiv 1$ and $d\equiv \sqrt{3}$, edge weights $m_1$ of the above form 
satisfies the normalization \eqref{normalization-edge} of Problem \ref{problemKKNspec}. 
Thus we have 
$$\frac{|E|}{\sigma(G,m_0,d)} \leq \frac{180}{1- (179+24\sqrt{5})/241} = 90 (31 + 12 \sqrt{5}).$$
For the truncated icosidodecahedron with side length $\sqrt{3}$, the radius of its circumscribed sphere is $\sqrt{93 + 36 \sqrt{5}}/2$, and thus the objective value for Problem \ref{problemGHWemb} is $120 \times (93 + 36 \sqrt{5})/4 = 90 (31 + 12 \sqrt{5})$.
Therefore, the one-skelton of the truncated icosidodecahedron is realized by an optimal embedding.

In the same way, the one-skeltons of the truncated cuboctahedron $(4,6,8)$ and the truncated octahedron $(4,6,6)$ 
are also realized by optimal embeddings of the corresponding graphs.

\section{Duality between Problem \ref{constraint-l2} and Problem \ref{problemKKNspec}}
In \cite{GoringHelmbergWappler2} it is shown by using the Lagrange approach that Problem \ref{problemKKNspec} is dual to Problem \ref{problemGHWemb}. 
In this section, we show that Problem \ref{problemKKNspec} is also dual to Problem \ref{problem-l^2-constraint}. 

Let $\varphi\colon V \to \R^{|V|}$ be an arbitrary map which are unconstrained, and let $\widetilde{m}_1 \colon E \rightarrow \R_{\geq 0}$ and $\mu \in \mathbb{R}$ be new variables.
We define the Lagrange function by  
\begin{eqnarray}
L(\widetilde{m}_1, \mu, \varphi) &=& \sum_{uv \in E} \widetilde{m}_1 (uv) 
\left( || \varphi (u)-\varphi (v) ||^2 - d(uv)^2 \right) \nonumber \\
&& + \mu \sum_{u \in V} m_0 (u) \left( || \varphi (u) ||^2 - 1 \right) 
+ \left\| \sum_{u \in V} m_0 (u) \varphi (u) \right\|^2.
\label{Lagrangef_ourdual}
\end{eqnarray}
It is easy to see that the following inequality holds.
\begin{eqnarray*}
\inf_{\varphi} \ \sup_{\widetilde{m}_1,\mu} L(\widetilde{m}_1, \mu, \varphi) \geq  \sup_{\widetilde{m}_1,\mu} \ \inf_{\varphi} L(\widetilde{m}_1, \mu, \varphi).
\end{eqnarray*}
For any $\varphi$ we have 
\begin{eqnarray*}
\sup_{\substack{\widetilde{m}_1 \colon E \rightarrow \R_{\geq 0}, \\ \mu \in \mathbb{R}}} L(\widetilde{m}_1, \mu, \varphi) = 
\left\{
\begin{array}{ll}\renewcommand{\arraystretch}{1.5}
\left\| \sum_{u \in V} m_0 (u) \varphi (u) \right\|^2 & \text{if} \ || \varphi (u)-\varphi (v) || \leq d(uv), \ \forall uv \in E \\
& \qquad \text{and} \ \sum_{u\in V} m_0(u) \| \varphi(u) \|^2 = M, \\
\infty & \text{otherwise}.
\end{array}
\right.
\end{eqnarray*}
Thus the optimization system of the left-hand side is the same as that of Problem \ref{problem-l^2-constraint}, that is,
$$M^2 \, \delta (G, m_0,d) =\inf_{\varphi \ \text{satisfying (\ref{constraint-l2})}} \ \sup_{\widetilde{m}_1,\mu} L(\widetilde{m}_1, \mu, \varphi).$$
The right-hand side gives its dual problem, which we shall identify. 
To do so, we rewrite the Lagrange function (\ref{Lagrangef_ourdual}) as
\begin{eqnarray*} 
L(\widetilde{m}_1, \mu, \varphi) &=& -\mu M -\sum_{uv \in E} d(uv)^2 \widetilde{m}_1 (uv) \\
&& + \left\| \sum_{u \in V} m_0 (u) \varphi (u) \right\|^2 + \mu \sum_{u \in V} m_0 (u) || \varphi (u) ||^2 \\
&& + \sum_{uv \in E} \widetilde{m}_1 (uv) || \varphi (u)-\varphi (v) ||^2.
\end{eqnarray*}
Let $\mu \in \mathbb{R}$ and $\widetilde{m}_1 \colon E \rightarrow \R_{\geq 0}$. If these parameters satisfy the inequality
\begin{equation}\label{Lagrange-dual-inequality}
\left\| \sum_{u \in V} m_0 (u) \varphi (u) \right\|^2 + \mu \sum_{u \in V} m_0 (u) 
|| \varphi (u) ||^2 + \sum_{uv \in E} \widetilde{m}_1 (uv) || \varphi (u)-\varphi (v) ||^2 \geq 0 
\end{equation}
for all $\varphi$, 
then the minimum of $L(\widetilde{m}_1, \mu, \varphi)$ over $\varphi$ is attained 
when $\varphi \equiv 0$.
Otherwise, $L(\widetilde{m}_1, \mu, \varphi)$ diverges to negative infinity:
\begin{eqnarray*}
\underset{\varphi}{\inf} \ L(\widetilde{m}_1, \mu, \varphi) = 
\left\{
\begin{array}{ll}\renewcommand{\arraystretch}{1.5}
-\mu M -\sum_{uv \in E} d(uv)^2 \widetilde{m}_1 (uv) & \text{if} \, \varphi \, \text{satisfies the inequality (\ref{Lagrange-dual-inequality})}, \\
-\infty & \text{otherwise}.
\end{array}
\right.
\end{eqnarray*}
We derive $\lambda_1 (G, (m_0,\widetilde{m}_1))$ from the inequality (\ref{Lagrange-dual-inequality}). 
If $\varphi$ is a constant map, then the inequality (\ref{Lagrange-dual-inequality}) becomes
\begin{eqnarray*} 
0 &\leq& \left\| \sum_{u \in V} m_0 (u) \varphi (u) \right\|^2 + \mu \sum_{u \in V} m_0 (u) || \varphi (u) ||^2 \\
&=& \left( M + \mu \right) \sum_{u \in V} m_0 (u) || \varphi (u) ||^2. 
\end{eqnarray*}
Thus we get $M \geq -\mu$.

Next we assume $\varphi$ is an eigenmap of $\lambda_1 (G, (m_0,\widetilde{m}_1))$.
Then the inequality (\ref{Lagrange-dual-inequality}) is
\begin{eqnarray*} 
0 &\leq& M^2 || \text{bar} (\varphi) ||^2 + \mu \sum_{u \in V} m_0 (u) || \varphi (u) ||^2 \\
&& + \lambda_1 (G, (m_0,\widetilde{m}_1)) \left( \sum_{u \in V} m_0 (u) || \varphi (u) ||^2 - M || \text{bar} (\varphi) ||^2 \right).
\end{eqnarray*}
By using $\text{bar} (\varphi) = 0$ we get $\lambda_1 (G, (m_0,\widetilde{m}_1)) \geq -\mu$. 

Therefore the dual problem is a problem that maximizes
$$-\mu M -\sum_{uv \in E} d(uv)^2 \widetilde{m}_1 (uv)$$
over all $\mu$ and $\widetilde{m}_1$ subject to the constraints $M \geq -\mu$ and $\lambda_1 (G, (m_0,\widetilde{m}_1)) \geq -\mu$.

$-\mu$ can be replaced by $\mu$.
Introducing a new variable $\lambda >0$,  
we may add a new constraint $\sum_{uv \in E} d(uv)^2 \widetilde{m}_1 (uv) = 1/\lambda$.
Then the objective function is $\mu M -1/\lambda$, and all constraints are listed as
\begin{eqnarray*}
&M \geq \mu, \\
&\lambda_1 (G, (m_0,\widetilde{m}_1)) \geq \mu, \\
&\sum_{uv \in E} d(uv)^2 \widetilde{m}_1 (uv) = \frac{1}{\lambda}.
\end{eqnarray*}
If we set $m_1 (uv) := D^2 \lambda \, \widetilde{m}_1 (uv)$ for $uv \in E$, 
then the constraints are
\begin{eqnarray*} 
& M \geq \mu, \\
& -\frac{1}{\lambda} \leq -\frac{1}{\lambda_1 (G, (m_0, m_1))} \mu D^2, \\
& \sum_{uv \in E} d(uv)^2 m_1 (uv) = D^2.
\end{eqnarray*}

In this optimization process, we first optimize the objective function with respect to the 
parameters $\mu$ and $\lambda$. 
Thus $\mu$ attains $M$ and $-1/\lambda$ attains $-\mu D^2/\lambda_1 (G, (m_0, m_1))$, 
and the problem reduces to the following: Maximize 
$$M^2 -\frac{D^2 M}{\lambda_1 (G, (m_0, m_1))}$$
over all edge weight $m_1 \colon E \rightarrow \R_{\geq 0}$ subject to $\sum_{uv \in E} d(uv)^2 m_1 (uv) = D^2$.
This problem is nothing but Problem \ref{problemKKNspec} and the desired duality is established. 
In particular, the inequality (\ref{kondo-inequality2'}) in Corollary \ref{cor-kondo-inequality} 
is reproduced.

\end{document}